
\documentclass[12pt,reqno]{amsart}

\headheight=6.15pt \textheight=8.75in \textwidth=6.5in
\oddsidemargin=0in \evensidemargin=0in \topmargin=0in

\usepackage{latexsym}
\usepackage{amssymb}

\renewcommand{\Re}{{\operatorname{Re}\,}}
\renewcommand{\Im}{{\operatorname{Im}\,}}

\newcommand{\s}{{\mathbf S}}

\renewcommand{\epsilon}{\varepsilon}

\newcommand{\var}{{\operatorname{Var}}}

\newcommand{\poly}{{\operatorname{Poly}}}

\newcommand{\sm}{\smallsetminus}
\newcommand{\szego}{Szeg\H{o} }

\newcommand{\inv}{^{-1}}
\newcommand{\kahler}{K\"ahler }

\newcommand{\wt}{\widetilde}
\newcommand{\wh}{\widehat}
\newcommand{\PP}{{\mathbb P}}

\newcommand{\R}{{\mathbb R}}
\newcommand{\C}{{\mathbb C}}

\newcommand{\Z}{{\mathbb Z}}

\newcommand{\CP}{\C\PP}
\renewcommand{\d}{\partial}
\newcommand{\dbar}{\bar\partial}
\newcommand{\ddbar}{\partial\dbar}

\newcommand{\E}{{\mathbf E}}

\newcommand{\half}{{\textstyle \frac 12}}
\newcommand{\vol}{{\operatorname{Vol}}}

\newcommand{\SU}{{\operatorname{SU}}}
\newcommand{\FS}{{{\operatorname{FS}}}}

\renewcommand{\phi}{\varphi}

\newcommand{\ccal}{\mathcal{C}}
\newcommand{\dcal}{\mathcal{D}}

\newcommand{\lcal}{\mathcal{L}}

\newcommand{\ocal}{\mathcal{O}}
\newcommand{\pcal}{\mathcal{P}}

\newcommand{\scal}{\mathcal{S}}

\newcommand{\al}{\alpha}

\newcommand{\ga}{\gamma}

\newcommand{\la}{\lambda}
\newcommand{\ep}{\varepsilon}
\newcommand{\de}{\delta}

\newcommand{\om}{\omega}
\newcommand{\Om}{\Omega}

\newtheorem{theo}{{\sc Theorem}}[section]

\newtheorem{cor}[theo]{{\sc Corollary}}

\newtheorem{lem}[theo]{{\sc Lemma}}
\newtheorem{prop}[theo]{{\sc Proposition}}

\newenvironment{rem}{\medskip\noindent{\it Remark:\/} }{\medskip}

\title{Convergence of random zeros on complex manifolds}

\author{Bernard Shiffman}
\address{Department of Mathematics, Johns Hopkins University, Baltimore, MD
21218, USA} \email{shiffman@math.jhu.edu}

\thanks{Research partially supported by NSF grant
DMS-0600982.}

\begin{document}

\begin{abstract}  We show that the zeros of random sequences of
Gaussian systems of polynomials of increasing degree almost surely converge to
the expected limit distribution under very general hypotheses.  In particular,
the normalized distribution of zeros of systems of $m$ polynomials of degree
$N$, orthonormalized on a regular compact set $K\subset\C^m$, almost surely
converge to the equilibrium measure on $K$ as $N\to\infty$.
\end{abstract}

\maketitle
 \section{Introduction}

The central theme of  this paper is the almost sure convergence to an equilibrium distribution of zeros of 
random sequences of holomorphic zero sets.  We work with simultaneous
zeros of random polynomials on $\C^m$ or, more generally, zeros of
random sections of powers of a holomorphic line bundle $L\to M$
over a compact \kahler manifold.  To review some history, the asymptotic 
properties of the zeros of random real polynomials were studied by Kac \cite
{Kac}   in 1949; a few years later, Hammersley \cite{Ham} investigated the 
zeros of the complexification of the Kac ensembles.  While the zeros of the 
Hammersley ensembles tend to accumulate on the unit circle in $\C$, the 
distribution of zeros is uniform (with respect to the Fubini-Study measure on 
$\CP^1$)  for the ``$\SU(2)$ polynomials'' studied in the physics literature 
(e.g., \cite{BBL,FH,Han,NV}). There has been a recent interest in the 
statistical properties of zeros and simultaneous zeros of random functions of 
several variables.  For example,  statistics on zeros and simultaneous zeros 
of random polynomials of several real variables were given in  \cite
{EK,Ro,SS,Ws}.  Results on zeros of random polynomials of several complex 
variables as well as of random holomorphic sections of line bundles can be 
found in  
\cite{Be1,BSZuniv,BSZsusy,BS,EK,SZ,SZnewton, SZvar,Zr} and elsewhere.
 
 In joint work with Zelditch
\cite{SZ} in 1999, we showed that if $L$ is a positive Hermitian line
bundle, the normalized zero currents $\frac 1N Z_{s_N}$ of a
random sequence $s_N\in H^0(M,L^N)$ of holomorphic sections of
increasing powers of $L$ almost surely converge to the curvature
form of $L$. This result was derived as a consequence of an
asymptotic expansion for the expected values $\E\big(\frac 1N
Z_{s_N}\big)$ of these zero currents together with an elementary
variance estimate. Furthermore, as a consequence of the sharp
variance asymptotics in a recent paper with Zelditch \cite{SZvar},
the normalized expected zero currents $\frac 1{N^k}
Z_{s_N^1,\dots,s_N^k}$ of $k$ independent random sections almost
surely converge to a uniform distribution (for $1\le k\le \dim M$). On the other
hand,  Bloom showed in \cite{Bl} that for random sequences of
polynomials of increasing degree, orthonormalized with respect to
certain measures on a compact set $K\subset \C$, the normalized
zero distributions converge almost surely to the equilibrium
measure on $K$.

In this paper, we  show that for all sequences of ensembles of
random sections of increasing powers of line bundles (e.g., random
polynomials of increasing degree), whenever the expected
normalized zero currents converge to a limit current, the
convergence holds almost surely for random sequences.  The only
condition imposed on the sequence of ensembles is that the probability measures are (complex)
Gaussian.

Our results are stated in terms of the currents of integration
over zero sets, which we call {\it zero currents.\/} For a system
$\s_N = (s_N^1, \dots, s_N^k)$ of $k$ holomorphic sections
$s_N^j\in H^0(M, L^N)$, $j=1,\dots,k$ (where $1\le k\le m=\dim
M$), we let
$$|Z_{\s_N}|:=\{z\in M:s_N^1(z) = \cdots = s_N^k(z)
= 0\}$$ denote its zero set, and we consider the current of
integration $Z_{{\s}_N}\in \dcal'^{k,k}(M)$ defined by

$$\left( Z_{{\s}_N},\phi\right)=\int_{|Z_{\s_N}|}
\phi\;,\qquad \phi\in \dcal^{m-k,m-k}_\R(M)\;,$$ whenever the zero
set of ${\s_N}$ is a codimension $k$ subvariety without multiplicity. 
(For $L^N$ base point free, $|Z_{\s_N}|$ is almost surely a smooth
codimension $k$ subvariety without multiplicity.) We recall that $\dcal^{j,j}_\R(M)$ 
denotes the space of real $\ccal^\infty$ forms of bidegree $(j,j)$ on $M$.

Our convergence result (Corollary \ref{convergence}) is a
consequence of the following  variance estimate:

\begin{theo}\label{var-intro} Let $L\to (M,\om)$ be an ample
holomorphic line bundle  over a compact \kahler manifold of
dimension $m$, and let $\scal$ be a linear subspace of the space $
H^0(M,L)$ of holomorphic sections of $L$. Suppose that $\scal$ has
a Gaussian probability measure. Let $1\le k\le m$ and let $\phi\in
\dcal^{m-k,m-k}_\R(M\sm B)$, where $B$ is the base point set of
$\scal$. Then the standard deviation of the zero statistics of $k$
independent random sections $s_1,\dots,s_k$ of $\scal$ satisfies
the bound
$$\sqrt{\var ( Z_{s_1,\dots,s_k},\phi)} \le C_m\,
\|\ddbar \phi\|_{\infty}\int_M \om^{m-k+1}\wedge c_1(L)^{k-1}\;,$$
where the constant
$C_m$ depends only on the dimension $m$ of $M$.
\end{theo}

The base point set of $\scal$ is the set of points $z\in M$ where
$s(z)=0$ for all $s\in\scal$.  In \S \ref{s-variance}, we prove a
slightly more general variance bound (Theorem \ref{mainvar}).

The key point is that the variance bound involves the $(k-1)$-th
power of $ c_1(L)$ instead of the $k$-th power. It follows that
the standard deviations of simultaneous zeros of random sections
of  the $N$-th tensor powers $L^N$ of $L$ grow at a lower rate than the expected 
values. To be
precise, given $k$
sections $s_N^1,\dots,s_N^k$ of $L^N$, we define the {\it normalized zero current\/}
\begin{equation}\wt Z_{s_N^1,\dots,s_N^k}:=\frac 1 {N^k}
Z_{s_N^1,\dots,s_N^k}\;.\end{equation} We then have the following
asymptotic variance bound:

\begin{theo}\label{varN} Let $L\to M$ be a holomorphic
line bundle over a projective algebraic manifold of dimension $m$ and let  $\phi\in
\dcal^{m-k,m-k}_\R(U)$, where $U$ is an open subset of $M$. Suppose that  we are given 
subspaces $\scal_N^j\subset
H^0(M,L^N)$ endowed with arbitrary complex Gaussian probability
measures $\ga_N^j$, such that $\scal^j_N$ has no base points in $U$, for $1\le j \le 
k,\ N \ge 1$.  Then
for independent random sections $s_N^j\in \scal^j_N$,
$${\var (\wt  Z_{s_N^1,\dots,s_N^k},\phi)} \le O\left(\frac 1{N^2}\right)\,.$$
\end{theo}

If $L$ is ample,  the conclusion of Theorem \ref{varN} follows
immediately from Theorem \ref{var-intro}.  For the general case,
the result follows from a modified version (Proposition \ref{var-var}) of Theorem \ref{var-intro}.

The fundamental case considered in \cite{SZ, SZvar} is where $L$
is an ample line bundle and $\scal_N^j=H^0(M,L^N)$.  Then  $L$ has
a Hermitian metric $h$ with positive curvature, and we give $M$
the \kahler form $\om=\pi c_1(L,h)$, where $c_1(L,h)$ is the Chern
form (see \eqref{chern}). The Hermitian metric $h$ on $L$ and the \kahler
form $\om$ induce Hermitian inner products on the spaces $H^0(M,L^N)$:
\begin{equation}\label{inner}\langle s_N, \bar s'_N \rangle = \int_M h^N(s_N,
s'_N)\,\frac 1{m!}\om^m\;,\qquad s_N,s_N' \in
H^0(M,L^N)\,\;,\end{equation} where $h^N$ denotes the induced metric on $L^N$. These inner products in turn induce 
Gaussian probability measures on the corresponding spaces (see \eqref{gaussian}). It was shown in
\cite{SZ} that  in this case, \begin{equation}\label{asymptexp}\E\big(\wt
Z_{s^1_N,\dots,s^k_N},\phi\big)=\int_M\om^k\wedge\phi+O\left(\frac 1N\right)
\;,\end{equation} where $\E(Y)$ denotes the expected value of a random variable $Y$.
For the fundamental case, we further have the  sharp variance bound from
\cite{SZvar1}:
\begin{equation}\label{sharper}\var\big(\wt Z_{s^1_N,\dots,s^k_N},\phi\big)
\le O\left(\frac 1{N^{m+2}}\right)\;.\end{equation}   In fact when  $k=1$,
we have the precise formula
\begin{equation}\label{precise}\textstyle\var\big(\wt Z_{s^N},\phi\big) = N^{-m-2}
\left[ \frac{\pi^{m-2}\,\zeta(m+2)}{4}\,\|\ddbar\phi\|_{L^2}^2
 +O(N^{-\frac 12 +\ep})\right]\;.\end{equation}  The variance formula \eqref{precise} 
 was previously obtained for zeros of polynomials in one variable (the $\SU(2)$ ensemble) by 
 Sodin and Tsirelson \cite{ST}.

 We point out here that the weaker bound of Theorem \ref{varN} holds for an
arbitrary sequence of Hermitian inner products on  arbitrary subspaces 
$\scal^j_N \subset H^0(M,L^N)$, whereas the sharp bound \eqref{sharper} is a 
consequence of the off-diagonal asymptotics of the \szego kernels for the full spaces $H^0(M,L^N)$ with the 
inner products \eqref{inner}. (See \cite{BSZuniv,SZvar} for discussions of the \szego kernel asymptotics.)  

Theorem \ref{varN} implies the following
general result on almost sure convergence to the average of sequences of
zeros of i.i.d.\
$k$-tuples of sections in $\scal_N$:

\begin{cor} \label{convergence}  Let $L\to M$ be a
holomorphic line bundle over a projective algebraic manifold. Suppose that  $\ga_N$ is 
a Gaussian probability
measure on  a subspace $\scal_{N}$ of $H^0(M,L^N)$, for
$N=1,2,3,\dots$, and let $\ga=\prod_{N=1}^\infty \ga_N^k$ denote
the product measure on $\scal_\infty:= \prod_{N=1}^\infty
(\scal_N)^k$. Let $U$ be an open subset of $M$ such that $\scal_N$
has no base points on $U$, for all $N$.

Suppose that the expected normalized zero currents
$$\E_{\ga_N^k}\left(\wt Z_{s_N^1,\dots,s_N^k}|U\right)$$ converge
weakly in $\dcal'^{k,k}(U)$ to a current $\Psi\in\dcal'^{k,k}(U)$.
Then for $\ga$-almost all sequences $\{(s_N^1,\dots,s_N^k)\}_{N=
1}^\infty\in{\mathcal S}_\infty$, $$ \wt Z_{s_N^1,\dots,s_N^k}|U
\to\Psi \quad\mbox{ weak}^*$$ (in the sense of measures); i.e., for almost all sequences,
$$\lim_{N\to\infty}\left(\frac{1}{N^k} Z_{s_N^1,\dots,s_N^k},\phi\right)=
\int_M\Psi \wedge\phi$$ for all continuous $(\dim M -k,\dim M -k)$ forms
$\phi$ with support contained in $U$.
\end{cor}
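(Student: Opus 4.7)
The plan is to combine Chebyshev's inequality with the Borel--Cantelli lemma, using the $O(N^{-2})$ variance bound of Theorem \ref{varN} as the main input, and then to leverage positivity of the zero currents in order to pass from a countable dense family of smooth test forms to all continuous test forms supported in $U$.

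First, I would fix a smooth compactly supported test form $\phi\in \dcal^{m-k,m-k}_\R(U)$ and $\ep>0$. Theorem \ref{varN} together with Chebyshev's inequality gives
$$\ga\bigl\{|(\wt Z_{s_N^1,\dots,s_N^k},\phi) - \E_{\ga_N^k}(\wt Z_{s_N^1,\dots,s_N^k},\phi)| > \ep\bigr\} \le \frac{C(\phi)}{\ep^2 N^2}\,.$$
Since $\sum_N N^{-2} < \infty$, the Borel--Cantelli lemma implies that for $\ga$-almost every sequence in $\scal_\infty$ the event in braces occurs for only finitely many $N$. Letting $\ep$ run through a positive sequence tending to $0$, and combining with the hypothesis $\E_{\ga_N^k}(\wt Z_N,\phi) \to (\Psi,\phi)$, one concludes that $(\wt Z_{s_N^1,\dots,s_N^k},\phi) \to (\Psi,\phi)$ for $\ga$-almost every sequence.

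Next, I would fix a countable family $\{\phi_j\} \subset \dcal^{m-k,m-k}_\R(U)$ that is dense, in the $C^0$-norm on each compact subset of $U$, in the space of all continuous compactly supported $(m-k,m-k)$ forms on $U$. The null set on which the preceding convergence fails depends on the test form, but the union of these null sets over the countable family $\{\phi_j\}$ is still a null set; on its complement, $(\wt Z_N,\phi_j) \to (\Psi,\phi_j)$ holds for every $j$ simultaneously.

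The main obstacle, and final step, is upgrading this to convergence against every continuous compactly supported test form $\phi$. Here positivity of the zero currents enters: each $Z_{s_N^1,\dots,s_N^k}$ is a positive $(k,k)$-current on $U$. For any compact $K \subset U$, I would select from the countable family a nonnegative smooth $(m-k,m-k)$ form $\chi_K$ that dominates $\om^{m-k}$ on $K$; the almost sure convergence $(\wt Z_N,\chi_K) \to (\Psi,\chi_K)$ then forces the masses $\|\wt Z_N\|_K$ to remain uniformly bounded for $\ga$-almost every sequence. The standard estimate $|(\wt Z_N,\phi - \phi_j)| \le C\,\|\wt Z_N\|_{\supp\phi}\cdot\|\phi - \phi_j\|_{C^0}$, valid for positive currents, then allows us to approximate an arbitrary continuous $\phi$ supported in $U$ by members of the countable family in $C^0$-norm on $\supp\phi$ and so to conclude $(\wt Z_N,\phi) \to (\Psi,\phi)$ almost surely, as required.
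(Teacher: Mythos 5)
Your proposal is correct and follows essentially the same route as the paper: the key input in both cases is the $O(N^{-2})$ variance bound of Theorem~\ref{varN}, summability in $N$ then yields almost sure convergence for each fixed smooth test form, and one passes to all continuous test forms via mass bounds and a countable dense family. The only cosmetic difference is that you package the summability step as Chebyshev plus Borel--Cantelli, whereas the paper integrates the nonnegative random variables $Y_N=(\wt Z_{\s_N}-\E\wt Z_{\s_N},\phi)^2$ and observes $\sum_N\E Y_N<\infty\Rightarrow Y_N\to0$ a.s.\ directly --- the two are equivalent; and for the mass bound the paper invokes the deterministic estimate $\int_{|Z_{\s_N}|\cap U}\om^{m-k}\le N^k\int_M c_1(A)^k\wedge\om^{m-k}$ (the estimate \eqref{upper}), valid for every nondegenerate $\s_N$, while you extract an a.s.\ mass bound from the convergence against a dominating test form --- both suffice, though the deterministic bound is cleaner and avoids circularity concerns.
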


The proof of Corollary \ref{convergence} is given in \S \ref{AE}.
Applying Corollary  \ref{convergence} to the full ensembles $\scal^j_N=H^0(M,L^N)$ with the inner product \eqref{inner}, we conclude from \eqref{asymptexp} that the simultaneous zeros of random
sequences $\{(s^1_N,\dots,s^k_N)\}$ are almost always
asymptotically uniform; i.e., 
$\wt Z_{s^1_N,\dots,s^k_N}
\to\om^k$ almost surely, as noted in \cite{SZvar} using the sharp variance bound \eqref{sharper} (and in \cite{SZ} for the case $k=1$).

We now mention some new applications of Corollary \ref{convergence}.
The first application is to the result given in joint work with Bloom \cite{BS} on zeros of
random polynomial systems orthonormalized on compact sets in
$\C^m$:

\begin{theo}\label{general} Let $\mu$ be a Borel probability
measure on a regular compact set $K\subset
\C^m$, and suppose that $(K,\mu)$ satisfies a Bernstein-Markov
inequality. Let $1\le k\le m$, and let $(\pcal_N^k, \ga_N^k) $
denote the ensemble of $k$-tuples of i.i.d.\ Gaussian random
polynomials of degree $\le N$ with the Gaussian measure $d\ga_N$
induced by $L^2(\mu)$. Then for almost all sequences of $k$-tuples of 
polynomials $\{(f^1_N,\dots,f^k_N)\}\in \prod_{N=1}^\infty \pcal_N^k$,
$$\wt Z_{f^1_N,\dots,f^k_N} \to \left(\frac i{\pi}\ddbar
V_K\right)^k \quad \mbox{weak}^*\,,$$
where $V_K$ is the pluricomplex Green function of $K$ with pole at
infinity.  In particular, for $k=m$,
$$\wt Z_{f^1_N,\dots,f^m_N} \to \mu_{eq}(K):=\left(\frac i{\pi}\ddbar
V_K\right)^m \quad \mbox{weak}^*\quad a.s.\;.$$
\end{theo}

The one-variable case of Theorem \ref{general} was given in \cite{Bl}, generalizing a 
result in  \cite{SZequil}.
The pluricomplex Green function in the theorem is given by\begin{equation}
V_K(z):=\sup\{u (z) \in \lcal:\ u\le 0\ \ {\rm on}\ \ K\}\;,
\end{equation} where \begin{equation}
\lcal:=\{u\in \mbox{PSH} (\C^m): u(z)\le \log^+
\|z\| +O(1)\}\;.
\end{equation}
If $\mu$ is a probability measure on a compact set
$K\subset\C^m$, one says that $(K,\mu)$ satisfies a
Bernstein-Markov inequality if for all $\ep>0$, there is a
positive constant $C=C(\ep)$ such that
\begin{equation}\label{BM}
\|p\|_K\le C e^{\ep\deg(p)} \|p\|_{L^2(\mu)}\,,
\end{equation}
for all  polynomials $p$. The measure $\mu_{eq}(K)$ is called the 
equilibrium measure of $K$; it is supported on the Silov boundary of $K$, and $(K,\mu_{eq}(K))$ satisfies a
Bernstein-Markov inequality (for $K$ regular).

In \cite{BS}, we showed that that the expected values of the normalized
zero currents of Theorem \ref{general} satisfy the asymptotics:
\begin{equation}\label{equilib}\E \left(\wt Z_{f^1_N,\dots,f^k_N}\right)\to \left(\frac i{\pi}\ddbar V_K\right)^k \quad \mbox{weak}^*\;.\end{equation} Theorem \ref{general} follows from  Corollary \ref{convergence} and \eqref{equilib}.
 
We remark that a generalization of \eqref{equilib} with weights was recently given by Bloom 
\cite{Bl2}, answering a question posed in \cite{SZequil}. To state Bloom's result, we let $w:K
\to [0,+\infty)$ be a continuous weight (such that the set $\{w>0\}$ is non-pluripolar), and we 
give each space $\pcal_N$ of polynomials of degree $\le N$ the Gaussian measure induced by $L^2
(w^{2N}d\mu)$.  We let $\phi=-\log w$ and define the ``weighted pluricomplex Green function" 
$$V_{K,\phi}(z):=\sup\{u (z)\in \lcal:\ u\le \phi\ \ {\rm on}\ \ K\}\,.$$
If $(K,\mu)$ satisfies a ``weighted Bernstein-Markov inequality" (replace $p$ with $w^Np$ in 
\eqref{BM}), one then has the asymptotics
\begin{equation}\label{Bloom} \E_{w^N} \left(\wt Z_{f^1_N,\dots,f^k_N}\right)\to \left(\frac i
{\pi}\ddbar V_{K,\phi}\right)^k \quad \mbox{weak}^*\;,\end{equation}  where $\E_{w^N}$ denotes 
the expected value for the weighted ensemble \cite[Th.~2.1]{Bl2}.  It then follows as before 
from Corollary \ref{convergence} that
$$\wt Z_{f^1_N,\dots,f^k_N} \to \left(\frac i{\pi}\ddbar
V_{K,\phi}\right)^k \quad \mbox{weak}^*\,,\quad  \mbox{a.s.}\;.$$

Our next application is to systems of random
polynomials  with  fixed Newton polytopes  as discussed in \cite{SZnewton}. Given
a convex integral polytope $P\subset [0,+\infty)^m$, we  denote by
$\poly(P)$ the space of polynomials
$$f(z_1, \dots, z_m) = \sum_{\alpha \in P\cap\Z^m} c_{\alpha} z_1^{\al_1}\cdots z_m^{\al_m}$$
with Newton polytope contained in $P$. It is a subspace of $H^0(\CP^m,
{\mathcal O}(p))$, the space of all homogeneous polynomials of degree $p$,
where $p$ is  the maximal degree of polynomials in $\poly(P)$. The
$\SU(m+1)$-invariant inner product on $H^0(\CP^m,{\mathcal O}(p))$
then restricts to $\poly(P)$ to define an inner product and
Gaussian measure there. It is the conditional Gaussian measure on
polynomials with the condition of having Newton polytope $P$. In joint work with Zelditch
\cite{SZnewton}, we studied the asymptotic statistical patterns of
zeros of polynomials in $\poly(NP)$, where $NP$ denotes the dilate
of $P$ by $N$.

We apply Corollary \ref{convergence} with $L= \ocal(1)\to
M=\CP^m$ and
 $\scal_N=\poly(NP)$ with the conditional Gaussian measure described above.
With this choice of ensembles, the expected zero current is not uniformly
distributed over $\CP^m$. Instead, it was shown in \cite{SZnewton} that for
each integral polytope
$P$,
 there is associated  a (discontinuous, piecewise smooth)  $(1,1)$-form
$\psi_P$ on $(\C^*)^m$ (where $\C^*=\C\sm\{0\}$) so that
\begin{equation} \label{MEANPOL} \E\left(\wt Z_{f_N}|(\C^*)^m\right)\to
\psi_P \quad \mbox{and hence} \quad \E\left(\wt
Z_{f^1_N,\dots,f^k_N} |(\C^*)^m\right)\to
\psi_P^k\,, \quad \mbox{for }\ 1\le k\le m.
\end{equation}
By Corollary
\ref{convergence}, we then have:

\begin{theo} \label{zerotheorem} For almost all sequences
$\{(f^1_N,\dots,f^k_N)\}\in\poly(NP)^k$, $N=1,2,3\dots$, $$ \wt
Z_{f^1_N,\dots,f^k_N}|(\C^*)^m \to\psi_P^k \quad
\mbox{weak}^*\;.$$ \end{theo}

In fact, to each polytope $P$ there is associated an {\it allowed
region\/} ${\mathcal A}_P \subset (\C^*)^m$ where
$\psi_P=p\,\om_\FS$ (where $\om_\FS$ denotes the Fubini-Study form on $\CP^m$), and hence the zeros of random sections of
$\poly(NP)$ tend to be equidistributed on ${\mathcal A}_P$, for $N$ large. On the complementary {\it forbidden region\/},
$\psi_P^m=0$ and hence  a random system of
$m$ polynomials with Newton polytope $NP$ has, on average,
few zeros in the forbidden region, for $N$ large.
It follows from Theorem \ref{zerotheorem} that   sequences of
simultaneous  zeros of systems of random polynomials
$f_N^1,\dots,f_N^m$ in $\poly(NP)$ will almost surely become concentrated in the allowed
region ${\mathcal A}_P$ and be uniformly distributed
there as $N\to\infty$.  

R. Berman \cite {Be2} recently gave an extension to non-positively curved line bundles  of the 
\szego kernel asymptotics of \cite{Ca,Ti,Ze} on which \eqref{asymptexp} is 
based. These asymptotics lead to similar convergence results for random zeros. 
To state Berman's result, we let $(L,h)\to (M,\om)$ be an ample Hermitian line 
bundle over a compact \kahler manifold.  Although $L$ is assumed to be ample, 
we do not assume that the metric $h$ has positive curvature. We give $H^0
(M,L^N)$ the inner product \eqref{inner} and the induced \szego kernel $\Pi_N$ 
and Gaussian probability measure (see \eqref{gaussian}--\eqref{sdef}).
 We let $\lcal_{(X,L)}$ denote the class of all (possibly singular) metrics on 
$L$ with positive curvature form, and we define the  {\it equilibrium metric\/} 
$h_e$ on $L$ by
\begin{equation}\label{equimetric} h_e:= \inf\{\tilde h\in  \lcal_{(X,L)} : 
\tilde h \ge h\}\;.\end{equation}
Choosing a local nonvanishing section $e_L$ of $L$, we write
$\phi_e= -\log |e_L|^2_{h_e}$, which is plurisubharmonic.  Berman  showed \cite
[Th.~2.3]{Be2} that $\phi_e$ is $\ccal^{1,1}$ and that the ``equilibrium 
measure" 
$$\mu_{eq}(h):=\Big(\frac i{2\pi} \ddbar\phi_e\Big)^m = c_1(L,h_e)^m$$ is 
absolutely continuous, i.e., is given by pointwise multiplication of the Chern 
forms.  Berman  then showed \cite[Th.~3.6]{Be2} that 
\begin{equation}\label{Berman} \frac 1N \log \Pi_N(z,z)\to 
\log \frac{ h(z)}{h_e(z)} \quad \mbox{uniformly},\end{equation} and hence 
\begin{equation}\label{Berman2} \E\big(\wt Z_{s^1_N,\dots,s^m_N}\big) \to \mu_
{eq}(h)\quad \mbox{weak}^*\;.
\end{equation}  Thus it follows from \eqref{Berman2} and Corollary \ref
{convergence} that 
\begin{equation}\label{Berman3} \wt Z_{s^1_N,\dots,s^m_N} \to \mu_{eq}(h)\quad 
\mbox{weak}^*\quad \mbox{a.s.}\;.
\end{equation}
Similar results hold for equilibrium measures on pseudoconcave domains in 
compact \kahler manifolds (see \cite{Be1}).

\section{Expected distribution of zeros and \szego kernels}\label{EDZ}

In this section, we review the formulas from \cite{SZ,SZvar} for
the expected current of integration over the zero set of $ k
\leq m$ i.i.d.\ Gaussian random sections of a holomorphic line bundle.

Let $(L,h)$ be a Hermitian holomorphic line bundle over a complex
manifold
$M$ and let $\scal$ be  a finite-dimensional subspace of $H^0(M,L)$ with a 
Hermitian inner product. The inner product on $\scal$ induces the complex Gaussian probability
measure
\begin{equation}\label{gaussian}d\gamma(s)=\frac{1}{\pi^m}e^
{-|c|^2}dc\,,\qquad s=\sum_{j=1}^{n}c_jS_j\,,\end{equation} on
$\scal$, where $\{S_j\}$ is an orthonormal basis for
$\scal$ and $dc$ is $2n$-dimensional Lebesgue measure. This
Gaussian is characterized by the property that the $2n$ real
variables $\Re c_j, \Im c_j$\break ($j=1,\dots,n$) are independent
random variables with mean 0 and variance $\half$; equivalently,
$$\E c_j = 0,\quad \E c_j c_k = 0,\quad  \E c_j \bar c_k =
\de_{jk}\,.$$

To state the explicit formula for the expected distribution of
zero divisors, we let
\begin{equation}\label{sdef}\Pi_\scal(z,z) = \sum_{j=1}^n
|S_j(z)|_h^2\;,\qquad z\in M\;,\end{equation} denote the
{\it \szego kernel\/} for
$\scal$ on the diagonal.

\begin{rem} The \szego kernel for the fundamental case $\scal=H^0(M,
L)$ (with the inner product \eqref{inner}) is given as follows: we let $X{\buildrel {\pi}\over \to} M$
denote the circle bundle of unit vectors in the dual bundle
$L\inv\to M$, and we identify sections $s\in \scal$ with functions
$\hat s$ in the space $\wh\scal$ of $\ccal^\infty$ functions on
$X$ such that $\dbar_b\hat s=0$ and $\hat s(e^{i\theta}x)=
e^{i\theta}\hat s(x)$. The {\it \szego projector\/} is the
orthogonal projector $\Pi:\lcal^2(X)\to\wh\scal$, which is given
by the {\it \szego kernel}
$$\Pi(x,y)=\sum_{j=1}^n \wh S_j(x)\overline{\wh S_j(y)}\qquad (x,y\in X)\;.$$
On the diagonal, we may write $\Pi(z,z)=\Pi(x,x)$, where
$\pi(x)=z$; then $\Pi(z,z)=\Pi_\scal(z,z)$ as defined by
(\ref{sdef}).  For details, see \cite{SZ}.
\end{rem}

\medskip

We now consider a local holomorphic frame  $e_L$  over
a trivializing chart $U$, and we write $S_j = f_j e_L$ over
$U$. Any  section $s\in\scal$ may then be written as
$$s = \langle c, F \rangle e_L\;, \quad \mbox{where\ \ \ }
F=(f_1,\dots,f_n)\;,\quad\langle c,F \rangle = \sum_{j = 1}^n c_j
f_j\;.$$ If  $s = f e_L$,  its Hermitian norm is given by
$|s(z)|_h = a(z)^{-\half}|f(z)|$ where $a(z) =
|e_L(z)|_h^{-2}$. The {\it Chern form\/} $c_1(L,h)$ of $L$ is
given locally by
\begin{equation}\label{chern}c_1(L,h)=\frac{\sqrt{-1}}{2 \pi}\d\dbar\log a\;.\end
{equation}
The current of integration over the zeros of $s = \langle c,F
\rangle\, e_L$ is then given locally by the
Poincar\'e-Lelong formula:
\begin{equation} Z_s =
\frac{\sqrt{-1}}{ \pi } \partial \bar{\partial}\log | \langle c,F
\rangle|\;. \label{Zs} \end{equation} 

We now recall the formula for the expected zero divisor for the
general case where $\scal$ has base points.

\begin{prop}\label{EZ} {\rm(\cite[Prop.~3.1]{SZ}, \cite[Prop.~2.1]{SZvar})}
Let $(L,h)$ be a Hermitian holomorphic line bundle on a complex
manifold
$M$, and let
$\scal$ be a finite-dimensional subspace of $ H^0(M,L)$.  We
give
$\scal$ an
inner product and we let $\ga$ be the induced  Gaussian probability
measure on $\scal$.
Then the expected zero current of a random section
$s\in\scal$ is given by

\begin{eqnarray*}\E_\ga(Z_s)  &=&\frac{\sqrt{-1}}{2\pi}
\partial
\bar{\partial} \log \Pi_{\scal}(z, z)+c_1(L,h)\;.\end{eqnarray*}
\end{prop}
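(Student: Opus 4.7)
The plan is to apply the Poincar\'e--Lelong formula \eqref{Zs} pointwise in the Gaussian variable $c$, take the expectation, and interchange $\E_\ga$ with $\frac{\sqrt{-1}}{\pi}\ddbar$ in the sense of currents. Working in a trivializing chart $U$ with $s = \langle c, F\rangle e_L$, I would pair both sides against a test form $\phi \in \dcal^{m-1,m-1}_\R(U)$ and apply Fubini to obtain
$$\left(\E_\ga(Z_s),\phi\right) \;=\; \frac{\sqrt{-1}}{\pi}\int_U \Phi(z)\,\ddbar\phi(z), \qquad \Phi(z):=\E_\ga \log|\langle c,F(z)\rangle|.$$
For $z$ off the base point set, $F(z)\neq 0$ and $\langle c, F(z)\rangle$ is a mean-zero complex Gaussian with variance $\|F(z)\|^2$; by the unitary invariance of $d\ga$ one may write $\langle c,F(z)\rangle = \|F(z)\|\,\xi$ with $\xi$ a standard complex Gaussian, so $\Phi(z) = \tfrac12\log\|F(z)\|^2 + C$ for the universal constant $C = \E\log|\xi|$. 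Since $\log\|F\|^2$ is plurisubharmonic, hence $L^1_{\rm loc}$ across the base locus, this formula defines a locally integrable function on all of $U$, and the additive constant contributes nothing after applying $\ddbar$.

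To extract the Chern form, use $|S_j|_h^2 = a^{-1}|f_j|^2$ with $a = |e_L|_h^{-2}$, so that $\Pi_\scal(z,z) = a(z)^{-1}\|F(z)\|^2$ and therefore
$$\frac{\sqrt{-1}}{2\pi}\ddbar\log\|F\|^2 \;=\; \frac{\sqrt{-1}}{2\pi}\ddbar\log\Pi_\scal(z,z) \;+\; \frac{\sqrt{-1}}{2\pi}\ddbar\log a \;=\; \frac{\sqrt{-1}}{2\pi}\ddbar\log\Pi_\scal(z,z) + c_1(L,h)$$
by \eqref{chern}. Combining with the first display yields the stated identity on $U$; since the right-hand side is intrinsic (independent of the choice of local frame $e_L$), the local identities patch to a global one on $M$.

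The main obstacle is justifying the Fubini exchange and the accompanying swap of $\E_\ga$ with $\ddbar$ as currents, which requires joint integrability of $(c,z)\mapsto \log|\langle c,F(z)\rangle|$ against the product of the Gaussian density $d\ga(c)$ and the smooth form $\ddbar\phi$. Off the base point set this is routine: $\log|w|$ is locally integrable with respect to Lebesgue measure on $\C$, and uniform control on compact subsets of $U$ where $\|F\|$ is bounded away from zero provides the required domination. Near base points one exploits that $\log\|F\|^2\in L^1_{\rm loc}$ (being psh) and that the explicit formula for $\Phi$ holds a.e., so the distributional identity extends across the base locus. This is essentially the argument in \cite[Prop.~3.1]{SZ} and \cite[Prop.~2.1]{SZvar}.
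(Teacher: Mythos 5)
Your proposal is correct and follows essentially the same route as the cited proof (and as the computation \eqref{EZX} in the paper's own Lemma \ref{variance}): apply Poincar\'e--Lelong, compute $\E_\ga\log|\langle c,F(z)\rangle| = \log\|F(z)\| + \E\log|\xi|$ via unitary invariance of the Gaussian, and convert $\log\|F\|^2$ into $\log\Pi_\scal(z,z)+\log a$ to produce the Chern form. The integrability needed for the Fubini/current interchange is handled exactly as you indicate, most cleanly via the splitting $\log|\langle c,F\rangle| = \log\|F\| + \log|\langle c,F/\|F\|\rangle|$, whose two terms are respectively $L^1_{\rm loc}$ (psh) in $z$ and of universal finite Gaussian expectation.
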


We note that the expected zero current $\E_\ga(Z_s)$ is a smooth form outside
the base point set of $\scal$. Proposition \ref{EZ} also holds for infinite 
dimensional spaces $\scal$; see \cite{SZvar1}.

We next state our general result on simultaneous expected zeros:

\begin{prop}\label{EZsimult}  Let $M$ be a projective algebraic manifold, and let
$(L_1,h_1),\dots ,(L_k,h_k)$ be Hermitian holomorphic line bundles on $M$
($1\le k\le \dim M$). Suppose we are given subspaces
$\scal_j\subset H^0(M,L_j)$ with inner products
$\langle,\rangle_j$ and let $\ga_j$ denote the associated Gaussian
probability measure on $\scal_j$ (for $1\le j\le k$).  Let $U$ be
an open subset of  $M$ on which $\scal_j$ has no base points for
all $j$. Then the expected simultaneous zero current of independent random
sections $s_1\in\scal_1,\;\dots,\;s_k\in\scal_k$ is given over $U$
by
$$\E_{\ga_1\times\cdots\times\ga_k}(Z_{s_1,\dots,s_k}) = \bigwedge_{j=1}^k
\E_{\ga_j}(Z_{s_j}) = \bigwedge_{j=1}^k
\left[\frac{\sqrt{-1}}{2\pi}
\partial
\bar{\partial} \log \Pi_{\scal_j}(z, z)+c_1(L_j,h_j) \right]\;.$$
\end{prop}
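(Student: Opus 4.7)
The plan is to proceed by induction on $k$, with the base case $k=1$ being exactly Proposition \ref{EZ}. For the inductive step, fix $k \ge 2$, assume the formula for $k-1$ independent sections, and reduce the average over $s_k$ to the one-section formula on the intermediate zero variety, then invoke the inductive hypothesis.

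First, since each $\scal_j$ is base-point-free on $U$, a Bertini/Sard-type argument shows that for $(\ga_1\times\cdots\times\ga_k)$-a.e.\ $(s_1,\ldots,s_k)$, each partial zero locus $V_j := \{s_1=\cdots=s_j=0\}\cap U$ is a smooth complex submanifold of codimension $j$ in $U$, and on this full-measure event
$$Z_{s_1,\ldots,s_k} = [V_{k-1}]\wedge Z_{s_k|_{V_{k-1}}}$$
as currents on $U$, where $Z_{s_k|_{V_{k-1}}}$ is the divisor cut out by $s_k$ on the smooth variety $V_{k-1}$. By Fubini and independence of the $s_j$, I would average over $s_k$ first with the others held fixed. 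Applying Proposition \ref{EZ} on $V_{k-1}$ with the restricted data $(L_k|_{V_{k-1}},\,h_k|_{V_{k-1}},\,\scal_k|_{V_{k-1}})$, where $\scal_k|_{V_{k-1}}$ carries the quotient inner product and the correspondingly pushed-forward Gaussian, gives
$$\E_{\ga_k}\bigl(Z_{s_k|_{V_{k-1}}}\bigr) = \frac{\sqrt{-1}}{2\pi}\,\ddbar\log \Pi_{\scal_k|_{V_{k-1}}}(z,z) + c_1(L_k,h_k)\big|_{V_{k-1}}.$$
A short orthonormal-basis calculation, choosing an orthonormal basis of $\scal_k$ adapted to the splitting $\scal_k = (\ker R)^\perp \oplus \ker R$ (where $R$ is the restriction map to $V_{k-1}$), then shows $\Pi_{\scal_k|_{V_{k-1}}}(z,z) = \Pi_{\scal_k}(z,z)|_{V_{k-1}}$: the first block restricts to an orthonormal basis of $\scal_k|_{V_{k-1}}$ in the quotient norm while the second block restricts to zero. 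Hence this expectation equals the restriction to $V_{k-1}$ of the $(1,1)$-form $\alpha := \E_{\ga_k}(Z_{s_k})$, which is smooth on $U$ by base-point-freeness.

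Putting these together,
$$\E_{\ga_k}(Z_{s_1,\ldots,s_k}) = [V_{k-1}]\wedge \alpha|_{V_{k-1}} = Z_{s_1,\ldots,s_{k-1}}\wedge \alpha,$$
the last identity holding because wedging with the smooth form $\alpha$ is a well-defined operation on the order-zero current $Z_{s_1,\ldots,s_{k-1}}$ that coincides with integration of $\alpha|_{V_{k-1}}$ against $[V_{k-1}]$. Taking the remaining expectation $\E_{\ga_1\times\cdots\times\ga_{k-1}}$, which commutes with wedging against the fixed smooth form $\alpha$ by weak-$*$ continuity, and invoking the inductive hypothesis yields the claimed formula on $U$. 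The step I expect to be the main obstacle is the generic transversality claim: one must check that the set in $\prod_j \scal_j$ where some $V_j$ fails to be smooth of the expected codimension is a proper analytic, hence $\ga$-null, subset. Base-point-freeness of each $\scal_j$ on $U$ makes the evaluation maps surjective at every point of $U$, so the standard Bertini argument (using projectivity of $M$) supplies the needed proper-intersection formula.
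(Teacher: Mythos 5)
Your inductive scheme—peel off $s_k$, apply Proposition \ref{EZ} on the intermediate variety $V_{k-1}$ with the pushed-forward Gaussian, observe $\Pi_{\scal_k|_{V_{k-1}}}(z,z)=\Pi_{\scal_k}(z,z)|_{V_{k-1}}$, then wedge with the smooth form $\alpha=\E_{\ga_k}(Z_{s_k})$—is essentially the argument of \cite[Prop.~2.2]{SZvar} that the paper invokes, and the orthonormal-basis/restriction-map calculation you sketch is correct. But there is a genuine gap: you invoke Fubini to interchange $\E_{\ga_1\times\cdots\times\ga_{k-1}}$ with $\E_{\ga_k}$ (and implicitly to assert that $\E_{\ga_1\times\cdots\times\ga_k}(Z_{s_1,\dots,s_k},\phi)$ is even well-defined), and this requires an integrability statement you never verify. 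The hypotheses only give base-point-freeness of each $\scal_j$ on the open set $U$, not on all of $M$, so the zero variety $|Z_{s_1,\dots,s_k}|$ need not lie in a fixed homology class with controlled intersection numbers, and there is no a priori uniform bound on the mass $\int_{|Z_{\s}|\cap U}\om^{m-k}$ over the support of $\phi$; one cannot simply appeal to compactness of $M$. The Tonelli escape route (replacing $\phi$ by $\om^{m-k}$) fails too, since $\om^{m-k}|_{V_{k-1}}$ is not compactly supported in the open manifold $V_{k-1}$, so Proposition \ref{EZ} does not yield the bound you would need.

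This is exactly what the paper addresses before quoting the inductive argument from \cite{SZvar}: it first proves that the map $(s_1,\dots,s_k)\mapsto (Z_{s_1,\dots,s_k},\phi)$ is $L^\infty$, by tensoring with a very ample auxiliary bundle $L'$ so that $A=L\otimes L'$ is very ample, replacing $s_j$ by $s_j\otimes t_j$, deforming to globally transverse sections $\sigma_j^\nu$ of $A$, and using the cohomological identity $\int_M c_1(A)^k\wedge\om^{m-k}$ as a uniform majorant (inequality \eqref{upper}). That uniform bound is what legitimizes the Fubini interchange you use. You should also note that (as you flag yourself) the generic-transversality/Bertini claim is needed, but that part is standard once base-point-freeness on $U$ is assumed; the $L^\infty$ estimate is the step that actually requires the extra idea and is the one your proposal is missing.
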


Proposition \ref{EZsimult} is a generalization of Proposition 2.2 in \cite{SZvar}, 
where the formula is proved under the assumption that the $\scal_j$ are identical 
subspaces of the same line bundle and are base point free on all of $M$.  To use the 
argument  in  \cite{SZvar} to prove the above form of the proposition, we must first 
show that, for each fixed  test form $\phi\in \dcal^{m-k.m-k}_\R(U)$,  the map \begin
{equation}\label{map} (s_1,\dots,s_k) \mapsto (Z_{s_1,\dots,s_k}, \phi)\end{equation} 
is $L^\infty$.

 To verify this assertion, we let $A$ be a very ample line bundle of the form $A=L
\otimes L'$, where $L'$ is also very ample.  Suppose that the $Z_{s_j}$ are smooth 
divisors intersecting transversely in $U$, which is the case almost surely (by 
Bertini's theorem), since the $\scal_j$ have no base points in $U$.  Let $\wt s_j=s_j
\otimes t_j\in H^0(M,A)$,  where the sections $t_j\in H^0(M,L')$ are chosen so that 
the zero divisors $Z_{\wt s_j}$ are smooth and intersect transversely in $U$.  Next 
deform the sections $\wt s_j$ to sections $\sigma_j^\nu \in H^0(M,A)$, with  $
\sigma_j^\nu\to \wt s_j$ as $\nu\to\infty$, such that the zero divisors $Z_{\sigma_j^
\nu}$ are smooth and intersect transversely on all of $M$.  Then 
$$\left(Z_{\sigma_1^\nu,\dots,\sigma_k^\nu},\om^{m-k}\right)=
 \int_M c_1(A,h)^k \wedge \om^{m-k}\;.
$$  Letting $\nu\to\infty$, we conclude that
\begin{equation}\label{upper} \int_{Z_{s_1,\dots,s_k}\cap U} \om^{m-k} \le
\int_{ Z_{\wt s_1,\dots,\wt s_k}\cap U} \om^{m-k} =  
\lim_{\nu\to\infty}  \int_{Z_{\sigma_1^\nu,\dots,\sigma_k^\nu}\cap U}  \om^{m-k} \le  
\int_M c_1(A,h)^k \wedge \om^{m-k},
\end{equation}
and hence
$$\left|\big(Z_{s_1,\dots,s_k},\phi\big)\right|
\le \frac{\|\phi\|_\infty}{(m-k)!}  \int_{Z_{s_1,\dots,s_k}\cap U} \om^{m-k} \le \frac{\|\phi\|_\infty}{(m-k)!} 
\int_M c_1(A,h)^k \wedge \om^{m-k}
\;,$$ verifying that the map \eqref{map} is bounded.

We now can apply the proof in \cite{SZvar}: The case $k=1$ follows from Proposition 
\ref{EZ} with $M=U$, and the inductive step follows by the proof of Proposition 2.2 in 
\cite{SZvar} with $M$ replaced by $U$.\qed

\section{The variance estimate}\label{s-variance}
In this section, we prove the following variance estimate, which is a slight
generalization of Theorem
\ref{var-intro}.

\begin{theo}\label{mainvar} Let $L_1,\dots ,L_k$ be stably base point free
line bundles on  a projective \kahler manifold $(M,\om)$, where $1\le k\le
m=\dim M$. Suppose we are given subspaces $\scal_j\subset H^0(M,L_j)$
endowed with Gaussian probability measures $d\ga_j$ (for $1\le j\le k$). Let
$\phi\in \dcal^{m-k,m-k}_\R(U)$, where $U$ is an open subset of $M$ on which
$\scal_{j}$ has no base points for $1\le j\le k$.

Then for a system
$\s=(s_1,\dots,s_k)$ of sections of $\scal_1,\dots,\scal_k$ chosen
independently and at random, we have:
$$\sqrt{\var (Z_{{\s}},\phi)} \le C_m  \,
\|\ddbar \phi\|_{\infty}\int_M \om^{m-k+1}\wedge\sum_{\la=1}^k \left[\prod_{1\le j\le 
k,\,j\neq\la}c_1
(L_j)\right]\;,$$ where
$C_m$ is a universal constant depending only on the dimension $m$.\end{theo}

 A line bundle $L$ is said to be
{\it stably base point free\/} if the base point set of
$H^0(M,L^N)$ is empty for $N$ sufficiently large. In particular,
ample line bundles are stably base point free as a consequence
of the Kodaira embedding theorem.   

\begin{rem} We remark that the hypothesis
that $L$ is stably base point free is essential for the estimate
of Theorem \ref{mainvar}; indeed, the stated upper bound of the theorem might be 
negative. For
example, let $m=k=3$ and let $M=Y\times \CP^1$, where $Y$ is the
blow-up of a point in $\CP^2$. To construct the line bundles,
we let $E$ be the exceptional divisor in $Y$ and $\wh H$ the
pull-back to $Y$ of a line $H\subset\CP^2$, and we let $\pi_1:M\to Y,\
\pi_2:M\to\CP^1$ denote the projections. We then let $L_j=L_D$ ($j=1,2,3$),
where $D=\pi_1^*(\wh H+4E)+\pi_2^*\{p\}$, and let
$[\om]=\pi_1^*(2\wh H-E)+\pi_2^*\{p\}$.  Then $$\int_M \om\wedge
c_1(L_D)^2=(\wh H+4E)^2 + 2\,(\wh H+4E)\cdot(2\wh H-E) = -15 +12=-3\;.$$
\end{rem}

We shall prove Theorem \ref{mainvar} by induction on $k$. The case $k = 1$
is  essentially Lemma 3.3 in \cite{SZ}. To go from  $k = 1$ to $k = 2$ (and
subsequently to higher $k$), we shall use the fact that  $Z_{s_1, s_2} = Z_{s_1}
\wedge Z_{s_2}$ is the current  of integration over the intersection $Z_{s_1}
\cap Z_{s_2}$  and  hence $\left(Z_{s_1, s_2}, \phi
\right)$ reduces to the integration of $\phi|_{Z_{s_1}}$ against
$Z_{ s_2}|_{Z_{s_1}\cap U}$, which is almost surely smooth.

We begin with the $k = 1$ step, which is based on a result from \cite{SZ}.

\begin{lem} \label{variance}
Under the hypotheses and notation of Proposition \ref{EZ}, we have
$$\var( Z_{s},\phi) \leq\;  C  \|\ddbar \phi\|_1^2\;,\quad \phi\in
\dcal^{m-1,m-1}_\R(M)\;,$$ where $C$ is a universal constant. (The main point
is that the constant $C$ is independent of $\dim \scal$ as well as $M$ and
$L$.)
\end{lem}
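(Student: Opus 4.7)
The plan is to follow the approach of Lemma~3.3 of \cite{SZ}: write the centered current $Z_s-\E(Z_s)$ as $\frac{\sqrt{-1}}{\pi}\ddbar U$ for an explicit Gaussian-type random potential $U$, and then bound the variance through the pointwise covariance of $U$.

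First, in a local frame $e_L$ on a trivializing chart, choose an orthonormal basis $\{S_j\}$ of $\scal$ with $S_j=f_j e_L$, so that a random section reads $s=\langle c,F\rangle e_L$ with $c$ a standard complex Gaussian in $\C^n$ and $F=(f_1,\dots,f_n)$. The Poincar\'e--Lelong formula \eqref{Zs} together with Proposition~\ref{EZ} yields the global identity
\begin{equation*}
Z_s - \E_\ga(Z_s) \;=\; \frac{\sqrt{-1}}{\pi}\,\ddbar \log\frac{|\langle c,F(z)\rangle|}{\|F(z)\|} \;=\; \frac{\sqrt{-1}}{\pi}\,\ddbar \log|\zeta(z)|,
\end{equation*}
where $\zeta(z):=\langle c,\hat F(z)\rangle$ and $\hat F(z):=F(z)/\|F(z)\|$. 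The key point is that $\hat F(z)$ is a unit vector in $\C^n$, so $\zeta(z)$ is a \textbf{standard} complex Gaussian random variable for each fixed $z$ (with $\E|\zeta(z)|^2=1$), independently of $M$, $L$, or $\scal$.

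Next, integration by parts gives, for $\phi\in\dcal^{m-1,m-1}_\R(M)$,
\begin{equation*}
(Z_s-\E Z_s,\phi) \;=\; \int_M \log|\zeta(z)|\,\alpha(z),\qquad \alpha\;:=\;\frac{\sqrt{-1}}{\pi}\ddbar\phi,
\end{equation*}
where $\alpha$ is a \emph{real} top form with $\int_M|\alpha|\le\frac{1}{\pi}\|\ddbar\phi\|_1$. Squaring, taking expectations, and exchanging the order of integration via Fubini gives
\begin{equation*}
\var(Z_s,\phi)\;=\;\int_M\!\int_M K(z,w)\,\alpha(z)\,\alpha(w),\qquad K(z,w)\;:=\;\mathrm{Cov}\bigl(\log|\zeta(z)|,\log|\zeta(w)|\bigr).
\end{equation*}
Since each $\zeta(z)$ is a standard complex Gaussian, the Cauchy--Schwarz inequality for covariances yields $|K(z,w)|\le\sqrt{K(z,z)K(w,w)}=\var(\log|\zeta|)$, a universal finite constant depending neither on $M$ nor $L$ nor $\scal$. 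Hence
\begin{equation*}
|\var(Z_s,\phi)|\;\le\;\var(\log|\zeta|)\,\Bigl(\int_M|\alpha|\Bigr)^2 \;\le\; C\,\|\ddbar\phi\|_1^2,
\end{equation*}
as required.

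The main technical obstacle is the justification of Fubini: $\log|\zeta(z)|$ blows up on the random zero locus of $\zeta$ and degenerates along the base-point set of $\scal$, so one must establish uniform-in-$z$ integrability bounds such as $\sup_z\E\bigl[(\log|\zeta(z)|)^2\bigr]<\infty$ (which follows immediately from the standard complex Gaussian distribution of $\zeta(z)$) and combine them with the smoothness of $\ddbar\phi$ to legitimately interchange the expectation with the double spatial integration. Once this is in place, the argument is entirely elementary and the constant $C$ has no dependence on the geometric data.
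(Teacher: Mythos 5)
Your proof is correct and follows essentially the same route as the paper's: both reduce $(Z_s,\phi)$ minus its mean to $\frac{\sqrt{-1}}{\pi}\int_M \log|\langle c,u(z)\rangle|\,\ddbar\phi$ with $u(z)$ a unit vector (so the integrand is the log-modulus of a standard complex Gaussian), and then bound the resulting double integral by Cauchy--Schwarz on the Gaussian second moment. The only cosmetic difference is that you invoke the covariance-kernel identity for $\var\bigl(\int X\alpha\bigr)$ directly, whereas the paper expands $\E(Y^2)$ into four terms and observes that the cross terms vanish because $\int_M\ddbar\phi=0$.
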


\begin{proof}   For completeness, we include a modified version of
the argument of \cite[Lemma~3.3]{SZ}.  As in \S\ref{EDZ}, we let $\{S_j\}$ be an 
orthonormal basis for $\scal$ and 
we write sections locally as $$s=\sum_{j=1}^n c_jS_j = \langle
c,\s \rangle = \langle c, F\rangle e_L$$ where $c=(c_1,\dots,c_n)$, $\s = (S_1,
\dots,S_n)$, $F=(f_1,\dots,f_n)$. By \eqref{chern}--\eqref{Zs}, we have
\begin{equation*} Z_s =
\frac{\sqrt{-1}}{ \pi } \partial \bar{\partial}\log | \langle c,F
\rangle| = \frac{\sqrt{-1}}{ \pi } \partial \bar{\partial}\log| \langle c,\s
\rangle|_h + c_1(L,h)\;. \end{equation*}

Let $\phi\in
\dcal^{m-1,m-1}_\R(M)$ and consider the random variable $Y:\scal\to\C$ given by \begin
{eqnarray}Y(s)&=&(Z_s,\phi) - \int_M c_1(L,h)\wedge\phi \nonumber\\& =&  \left(\frac
{\sqrt{-1}}{ \pi } \partial \bar{\partial}\log| \langle c,\s
\rangle|_h,\,\phi\right)\nonumber\\& =& \frac{\sqrt{-1}}{ \pi }\int_M\log| \langle c,
\s
\rangle|_h  \;\ddbar\phi\;.\label{X}\end{eqnarray} 
We note that $\var(Z_s,\phi)=\var(Y)$.  

By Proposition \ref{EZ}, we have \begin{equation}\label{EZX} \E(Y)= \frac{\sqrt{-1}}{2
\pi}\int_M\log\Pi_\scal(z,z)\;  \ddbar\phi(z) = \frac{\sqrt{-1}}{\pi}\int_M \log|\s|_h
\;\ddbar\phi\;.\end{equation}
Furthermore, by \eqref{X} we have \begin{equation}\label{EZV} \E(Y^2)=
\frac{-1}{\pi^2} \int_M \int_M \ddbar \phi(z)\;\ddbar \phi(w)
\int_{\C^n} \log |\langle c,\s(z)\rangle|_h\, \log |\langle c,\s(w)
\rangle|_h\, d\ga(c)\;.
\end{equation} We let $u(z) = |\s(z)|_h\inv \s(z)$ so that $|u(z)|_h\equiv
1$, and we have
\begin{multline*}\log |\langle c,\s(z)\rangle|_h \,\log |\langle c,\s(w)\rangle|_h =
\log |\s(z)|_h \,\log |\s(w)|_h + \log|\s(z)|_h \,\log |\langle c,u(w)\rangle|_h 
\\+ \log |\langle c,u(z)\rangle |_h\, \log |\s(w)|_h  +
 \log |\langle c,u(z)\rangle |_h  \,\log |\langle c, u(w)\rangle
|_h\;,\end{multline*} which decomposes (\ref{EZV}) into four terms. By \eqref{EZX}, 
the
first term contributes
\begin{equation}\frac{-1}{\pi^2}  \int_M \int_M \ddbar \phi(z)\;\ddbar 
\phi(w)\log |\s(z)|_h \,\log |\s(w)|_h  =  (\E Y )^2\;.
\end{equation} The $c$-integral of the second term is independent of $w$ and hence
the second term in the expansion of \eqref{EZV} vanishes.  The third term likewise 
vanishes. Therefore,
\begin{equation} \label{varint} \var( Z_{s},\phi) = \frac{-1}{\pi^2}
 \int_M \int_M \ddbar \phi(z)\;\ddbar \phi(w)
\int_{\C^n} \log |\langle c,u(z)\rangle|_h \,\log |\langle c,u(w)\rangle|_h\, d\ga(c)
\;.\end{equation} By Cauchy-Schwartz,
\begin{eqnarray}&&\hspace{-.5in}\left|\int_{\C^n} \log |\langle c,u(z)\rangle| \,\log 
|\langle
c,u(w)\rangle| d\ga(c)\right|\nonumber \\&& \le
\left(\int_{\C^n} (\log |\langle c,u(z)\rangle|)^2
 d\ga(c)\right)^\half\left(\int_{\C^n}  (\log
|\langle c,u(w)\rangle|)^2 d\ga(c)\right)^\half\nonumber\\&&
=\int_{\C^n} (\log |c_1|)^2 d\ga(c)\ =\ \frac 1 \pi \int_\C (\log
|c_1|)^2e^{-|c_1|^2}\, dc_1\;.\label{logint}\end{eqnarray} The
conclusion follows immediately from
(\ref{varint})--(\ref{logint}).
\end{proof}

\medskip\noindent{\it Proof of Theorem \ref{mainvar}:\/}
We shall prove by induction on $k$ that the variance bound holds when $\omega$ is
an arbitrary closed semi-positive $(1,1)$-form  on $M$ that is strictly
positive on $U$.  Let $\om$ be such a form, and let $\Om:=\frac 1{m!}\om^m|U$ denote 
the induced volume form on $U$.  Let 
$\eta\in\dcal^{2m}(U)$ be a compactly supported, top degree form on $U$, and write
$\eta=f\Om$.
We define the sup norm $\|\eta\|_{\infty}:=
\|f\|_{\infty}$.  The $L^1$ norm is given by
\begin{equation}\label{L1}\|\eta\|_1 = \int_U |f|\Om \le 
\|\eta\|_{\infty}\int_U\Om = \|\eta\|_\infty \vol(U)\;.\end{equation}
We note that while the $L^\infty$ norm depends on $\om$, the $L^1$ norm of $\eta$ is
independent of the choice of the \kahler form  on $U$.

The case $k=1$ is an immediate consequence of Lemma \ref{variance} (with $M$ replaced 
by $U$) and \eqref{L1}. Now let
$2\le k\le m$ and assume the inequality has been proven for $k-1$ sections.
We let $\s=(s_1,\dots,s_k)\in \prod_{j=1}^k \scal_j$ be a random $k$-tuple of
sections.  We write
$\s=(\s',s_k)$, where
$\s'=(s_1,\dots,s_{k-1})$. 

By  Bertini's Theorem,
the hypersurfaces $|Z_{s_j}|$ ($1\le j\le k$) are smooth in $U$
and intersect transversely in $U$ for almost all $\s$, so that we may write
$Z_{\s} = Z_{\s'}\wedge Z_{s_k}$. Let $\phi\in\dcal^{m-k,m-k}_\R(U)$ be a test form.  
By Proposition \ref{EZsimult}, $\E\wt
Z_{\s} = \E Z_{\s'}\wedge \E Z_{s_k}$ and hence
\begin{equation}\label{vark} \var (Z_\s,\phi)= \E(Z_{\s'}\wedge Z_{s_k},\phi)^2 - (\E
Z_{\s'}\wedge \E Z_{s_k},\phi)^2\;.\end{equation} We write \begin{eqnarray} (Z_{\s'}
\wedge Z_{s_k},\phi)^2 - (\E
Z_{\s'}\wedge \E Z_{s_k},\phi)^2 &=& G_1 +G_2\;,\label{G1G2} \qquad \mbox{where}\\G_1\ 
= \
G_1(\s',s_k)&=&(Z_{\s'}\wedge Z_{s_k},\phi)^2 -(Z_{\s'}\wedge \E Z_{s_k},\phi)^2\quad 
a.e.\,,\label{part1}\\G_2\ = \ G_2(\s')&=&
(Z_{\s'}\wedge \E Z_{s_k},\phi)^2- (\E Z_{\s'}\wedge \E Z_{s_k},\phi)^2\ 
a.e.\label{part2}\end{eqnarray}
Hence \begin{equation}\label{varG1G2} \var (Z_\s,\phi)= \E G_1+\E G_2\;.\end{equation}

We now let $V=|Z_{\s'}|\cap U$. (Recall that $|Z|$ denotes the
support of a zero current $Z$.) The idea of the proof is to notice
that $(Z_{\s'}\wedge Z_{s_k},\phi)=( Z_{s_k}|_{V},\phi|_{V})$ and then
to apply Lemma  \ref{variance} with $M$ replaced by $|Z_{\s'}|$ and
with $Z_s= Z_{s_k}$ in order to obtain the desired bound
for $\E G_1$. We then reverse the roles of $Z_{\s'}$ and $Z_{s_k}$ and use
a similar argument to obtain the bound for $\E G_2$.

To obtain a bound for $\E G_1$, we first integrate over
$\scal_k$:
\begin{eqnarray} \int_{\scal_k} G_1(\s',s_k)\,d\ga_k(s_k)
&=& \int_{\scal_k}\left[( Z_{s_k}|_{V},\phi|_{V})^2 -(\E
Z_{s_k}|_{V},\phi|_{V})^2\right]\,d\ga_k(s_k)\nonumber\\ &=&
\var(Z_{s_k}|_{V},\phi|_{V}) \ \le \ C\left(
\int_{V}|\ddbar \phi|\right)^2\nonumber\\
&\le & C\left[\|\ddbar\phi\|^2_\infty
 \int_{V}\om^{m-k+1}\right]^2\;,
\label{ineqs}\end{eqnarray} where the first inequality is by Lemma
\ref{variance} (with $M$ replaced by $V$).

We claim that
\begin{equation}\label{claim1} \int_{V}\om^{m-k+1} \le \int_M
 \om^{m-k+1}\wedge\prod_{j=1}^{k-1}c_1(L_j)\quad \mbox {for
a.a. } \s'\;.\end{equation} To verify (\ref{claim1}), choose a positive integer $N$ so 
that the
line bundles $L_j^N$ are base point free, and then (by applying
Bertini's theorem, as before) deform the sections
$s_j^{\otimes N}$ to sections $\sigma_j^\nu\in H^0(M,L_j^N)$, with $\sigma_j^\nu\to 
s_j^{\otimes N}$ as $\nu\to\infty$, such that the zero sets
$Z_{\sigma_1^\nu,\dots,\sigma_{k-1}^\nu}$ are smooth
reduced varieties of  dimension $m-k+1$ (in all of $M$). We then have
\begin{equation}\label{ineq1}\int_{Z_{\sigma_1^\nu,\dots,\sigma_{k-1}^\nu}\cap U}\om^{m-k+1} \le \int_{Z_{\sigma_1^\nu,\dots,\sigma_{k-1}^\nu}}\om^{m-k+1}
= N^{k-1}\int_M
\om^{m-k+1}\wedge\prod_{j=1}^{k-1}c_1(L_j)\;.\end{equation}
Letting $\nu\to\infty$ and noting that $$\int_{Z_{\sigma_1^\nu,\dots,\sigma_{k-1}^\nu}\cap U}\om^{m-k+1} \to N^{k-1}\int_
V\om^{m-k+1}\;,$$ we obtain (\ref{claim1}).
Hence by (\ref{ineqs})--(\ref{claim1}), we have
\begin{equation}\label{EG1} \E G_1 = \int_{\scal'}
\int_{\scal_k} G_1(\s',s_k)\,d\ga_k(s_k)\, d\ga'(\s') \le C\,
\|\ddbar \phi\|_\infty^2\,\left( \int_M
\om^{m-k+1}\wedge\prod_{j=1}^{k-1}c_1(L_j)\right)^2\;,
\end{equation}
where $\scal'=\scal_1\times \cdots\times\scal_{k-1}$, $\ga'=\ga_1\times\cdots\times \ga_{k-1}$, and $C$ 
denotes a constant depending only on $m$.

We now estimate $\E G_2$. First we note that $\E(G_2)$ is the variance of the   random 
variable $X$ on $\scal'$ given a.e.\ by
$$X(\s') = \left( Z_{\s'} \wedge \E Z_{s_k},\phi \right).$$
Hence $$\E G_2 = \var (X)=\E G_2'\;,$$ where
\begin{equation}\label{part2*}  G_2'(\s') :=\big[X(\s')-\E X\big]^2 =
\big((Z_{\s'} - \E Z_{\s'})\wedge \E  Z_{s_k},\phi \big)^2\;.\end{equation}
By Cauchy-Schwartz, we have the upper bound
\begin{eqnarray*} G'_2(\s') &=& \left[\int_{\scal_k}
\big((Z_{\s'} - \E Z_{\s'})\wedge  Z_{s_k},\phi \big)
\,d\ga_k(s_k)\right]^2\\ & \le &
\int_{\scal_k} \big((Z_{\s'} - \E Z_{\s'})\wedge  Z_{s_k},\phi \big)^2
\,d\ga_k(s_k)\;,
\end{eqnarray*} for all $\s'$ such that $Z_{\s'}\cap U$ is a smooth submanifold of 
codimension $k-1$.

Writing $W_{s_k} = | Z_{s_k}|\cap U$, we then have  \begin{eqnarray}  \E G_2 \ =\ \E 
G'_2 & \le  &\int_{\scal'}d\ga'(\s')\ \int_{\scal_k}d\ga_k(s_k)\ 
\big((Z_{\s'} - \E Z_{\s'})\wedge  Z_{s_k},\phi \big)^2
\nonumber \\ &=& 
\int_{\scal_k}d\ga_k(s_k)\
\int_{\scal'}d\ga'(\s')\ \big((Z_{\s'} - \E
Z_{\s'})|_{W_{s_k}},\phi|_{W_{s_k}} \big)^2 \nonumber\\ & = & \int_{\scal_k}d\ga_k
(s_k)\
\var ( Z_{\s'}|_{W_{s_k}},\phi|_{W_{s_k}} )\;,\label{EG2}
 \end{eqnarray} where the variance is with respect to $\s'$.
If $|Z_{s_k}|$ is a smooth submanifold of $M$, then we can apply the inductive 
hypothesis
to $|Z_{s_k}|$ to conclude that
\begin{eqnarray}\label{ifsmooth} \sqrt{\var ( Z_{\s'}|_{W_{s_k}},\phi|_{W_{s_k}} )} & 
\le &
C_{m-1} \,\|\ddbar\phi\|_\infty\int_{|Z_{s_k}|}\om^{m-k+1} \wedge
\sum_{\la=1}^{k-1}
 \left[\prod_{1\le j\le k-1,\,j\neq\la}c_1
(L_j)\right]  \nonumber\\ &=& C_{m-1} \,\|\ddbar\phi\|_\infty\int_M\om^{m-k+1} \wedge
\sum_{\la=1}^{k-1} \left[\prod_{1\le j\le k,\,j\neq\la}c_1
(L_j)\right]\;.\end{eqnarray} 

If $L_k$ is base point free on $M$, then $|Z_{s_k}|$ will almost surely be smooth and 
hence \eqref{ifsmooth} will hold almost surely.  For the general case,  we use the 
following argument:  Since $\scal_k$ has no base points in $U$, $Z_{s_k}$  will almost
surely be smooth in $U$. Now suppose $Z_{s_k}$ is smooth in $U$, but has
singularities in $M$.  Let $\pi:\wt M\to M$ be a resolution of the
singularities of
$Z_{s_k}$; i.e., $\pi$ is a modification of
$M$ that is biholomorphic outside the singular locus of $Z_{s_k}$ such that
the proper transform
$\wt Z_{s_k}\subset
\wt M$ of
$Z_{s_k}$ is smooth. Since $Z_{s_k}$ is smooth in $U$, $\pi$ does not blow
up points of $U$.  Applying the inductive assumption to the linear systems
$\wt \scal_j:= \pi^*\scal_j|\wt Z_{s_k}$ ($1\le j\le k-1$) and semi-positive form $\wt
\om := \pi^*\om|\wt Z_{s_k}$ (which is strictly positive on $\wt U=U$), we obtain
(\ref{ifsmooth}) for almost all
$s_k\in\scal_k$.

Hence it follows from (\ref{EG2})--(\ref{ifsmooth}) that
\begin{equation}\E G_2 \le C_{m-1}^2\,
\|\ddbar \phi\|_\infty^2\,\left( \int_M\om^{m-k+1} \wedge
\sum_{\la=1}^{k-1} \left[\prod_{1\le j\le k,\,j\neq\la}c_1
(L_j)\right]\right)^2\;,
\label{EG2*}\end{equation}
The inductive step follows from (\ref{varG1G2}), (\ref{EG1}) and
(\ref{EG2*}).
\qed

\section{Almost sure convergence of zeros}\label{AE}
We complete this paper by verifying Theorem \ref{varN} and
Corollary \ref{convergence}.  Theorem \ref{varN} is a consequence of the following 
variant of Theorem \ref{var-intro}: 

\begin{prop}\label{var-var} Let $L\to (M,\om)$ be a
holomorphic line bundle  over a compact \kahler manifold of
dimension $m$, and let $\scal_j\subset H^0(M,L)$, $j=1,\dots,k$, be linear spaces 
endowed with Gaussian probability measures. Let  $\phi\in
\dcal^{m-k,m-k}_\R(U)$,  where $U$ is an open subset of $M$ on which
$\scal_{j}$ has no base points for $j=1,\dots, k$.

Suppose that $A$ is a very ample line bundle on $M$ of the form $A=L\otimes L'$, where $L'$ 
is also very ample. Then for independent random sections $s_j\in\scal_j$, 
$$\sqrt{\var ( Z_{s_1,\dots,s_k},\phi)} \le C_m\,
\|\ddbar \phi\|_{\infty}\int_M \om^{m-k+1}\wedge c_1(A)^{k-1}\;,$$
where the constant
$C_m$ depends only on the dimension $m$ of $M$.
\end{prop}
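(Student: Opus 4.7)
The plan is to prove Proposition \ref{var-var} by induction on $k$, following the structure of the proof of Theorem \ref{mainvar}, but replacing each appeal to stable base point freeness of $L$ by a deformation argument in $H^0(M,A)$. The base case $k=1$ is immediate from Lemma \ref{variance} combined with $\|\ddbar\phi\|_1\le\|\ddbar\phi\|_\infty\int_M\om^m/m!$; since $c_1(A)^0=1$, this has the stated form.

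For $k\ge 2$, I use the decomposition $\var(Z_\s,\phi)=\E G_1+\E G_2$ from \eqref{varG1G2}. Lemma \ref{variance} applied fibrewise on $V=|Z_{\s'}|\cap U$ gives $\E G_1\le C\bigl(\|\ddbar\phi\|_\infty\int_V\om^{m-k+1}\bigr)^2$, so the task is to bound $\int_V\om^{m-k+1}$. The original proof invoked \eqref{claim1}, which required $L$ stably base point free. In its place I adapt the deformation argument of \eqref{upper}: choose $t_j\in H^0(M,L')$ so that $\wt s_j:=s_j\otimes t_j\in H^0(M,A)$ have smooth divisors intersecting transversely in $U$, then deform each $\wt s_j$ to $\sigma_j^\nu\in H^0(M,A)$ whose zero divisors are smooth and transverse on all of $M$ (possible because $A$ is very ample). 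Since $|Z_{s_j}|\subseteq|Z_{\wt s_j}|$ and each $|Z_{\sigma_j^\nu}|$ has cohomology class $c_1(A)$, passing to the limit as $\nu\to\infty$ gives
$$\int_V\om^{m-k+1}\;\le\;\int_{|Z_{\wt\s'}|\cap U}\om^{m-k+1}\;=\;\lim_{\nu\to\infty}\int_{|Z_{\sigma_1^\nu,\dots,\sigma_{k-1}^\nu}|\cap U}\om^{m-k+1}\;\le\;\int_M\om^{m-k+1}\wedge c_1(A)^{k-1}\,.$$

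For $\E G_2$, I apply the inductive hypothesis of Proposition \ref{var-var} to the restrictions $\scal_j|_{W_{s_k}}\subset H^0(W_{s_k},L|_{W_{s_k}})$, where $W_{s_k}=|Z_{s_k}|\cap U$ (with a resolution of $|Z_{s_k}|$ as in the proof of Theorem \ref{mainvar} if necessary); since $A|_{W_{s_k}}=L|_{W_{s_k}}\otimes L'|_{W_{s_k}}$ and $L'|_{W_{s_k}}$ remain very ample, the hypothesis applies and bounds $\sqrt{\var(Z_{\s'}|_{W_{s_k}},\phi|_{W_{s_k}})}$ by $C_{m-1}\,\|\ddbar\phi\|_\infty\int_{W_{s_k}}\om^{m-k+1}\wedge c_1(A)^{k-2}$. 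Extending the domain of integration to $|Z_{s_k}|$ (a valid upper bound since the integrand is a non-negative $(m-1,m-1)$-form after choosing positive representatives) and applying Poincar\'e--Lelong to the closed form $\om^{m-k+1}\wedge c_1(A)^{k-2}$ shows this is at most $\int_M c_1(L)\wedge\om^{m-k+1}\wedge c_1(A)^{k-2}$. Since very ampleness of $L'$ gives a positive representative of $c_1(L')=c_1(A)-c_1(L)$, we then have
$$\int_M \om^{m-k+1}\wedge c_1(A)^{k-2}\wedge c_1(L)\;\le\;\int_M \om^{m-k+1}\wedge c_1(A)^{k-1}\,.$$
Combining the bounds for $\E G_1$ and $\E G_2$ via $\sqrt{\E G_1+\E G_2}\le\sqrt{\E G_1}+\sqrt{\E G_2}$ closes the induction.

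The main technical obstacle is the limit $\int_{|Z_{\sigma_1^\nu,\dots,\sigma_{k-1}^\nu}|\cap U}\om^{m-k+1}\to\int_{|Z_{\wt\s'}|\cap U}\om^{m-k+1}$ appearing in the $\E G_1$ bound: one must choose the deformations so that the intersection cycles behave continuously as $\nu\to\infty$, and this is handled exactly as in the corresponding convergence argument for \eqref{upper} in \S\ref{EDZ}.
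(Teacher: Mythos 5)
Your proposal is correct and follows essentially the same route as the paper: the paper proves Proposition \ref{var-var} by rerunning the induction of Theorem \ref{mainvar} with $c_1(L_j)$ replaced by $c_1(A)$, using the deformation inequality \eqref{upper}/\eqref{claim2} in place of \eqref{claim1} for the $\E G_1$ bound and the analogue \eqref{ifsmooth2} of \eqref{ifsmooth} for the $\E G_2$ bound, exactly as you do. Your bookkeeping in the inductive step (the hypothesis on the $(m-1)$-dimensional hypersurface $|Z_{s_k}|$ yields $c_1(A)^{k-2}$, which Poincar\'e--Lelong and $c_1(L)\le c_1(A)$ convert to $c_1(A)^{k-1}$ on $M$) is the dimensionally consistent version of the paper's display \eqref{ifsmooth2}, whose exponents appear to be off by one.
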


\begin{proof}  The result follows by repeating the proof of Theorem \ref{mainvar} with $c_1(L_j)$ replaced by $c_1(A)$.  
Instead of \eqref{claim1}, we use the inequality
\begin{equation}\label{claim2} \int_{|Z_{\s'}|\cap U}\om^{m-k+1} \le \int_M
 \om^{m-k+1}\wedge c_1(A)^{k-1}\;,\end{equation} where  $\s'=(s_1,\dots,s_{k-1})\in 
\prod_{j=1}^{k-1}\scal_j$ is chosen as before so that $|Z_{\s'}|\cap U$ is a smooth
reduced submanifold of  dimension $m-k+1$.  The inequality \eqref{claim2} is the 
same as the inequality \eqref{upper} (with $k$ replaced by $k-1$), which was verified in the proof of Proposition \ref{EZsimult}.  In place of \eqref{ifsmooth}, we have
\begin{eqnarray}\label{ifsmooth2} \sqrt{\var ( Z_{\s'}|_{W_{s_k}},\phi|_{W_{s_k}} )} & 
\le &
C_{m-1} \,\|\ddbar\phi\|_\infty\int_{|Z_{s_k}|}\om^{m-k+1} \wedge
c_1(A)^{k-1}  \nonumber\\ &=& C_{m-1} \,\|\ddbar\phi\|_\infty\int_M\om^{m-k+1} \wedge
c_1(A)^{k-1} \wedge c_1(L) \nonumber\\ & \le & C_{m-1} \,\|\ddbar\phi\|_\infty\int_M\om^{m-k+1} \wedge
c_1(A)^k \;.\end{eqnarray} 
The proof of \eqref{ifsmooth2} is exactly the same as that of \eqref{ifsmooth}.
\end{proof}

\noindent{\it Proof of Theorem  \ref{varN}:\/} Let $L\to M,\ U,\ (\scal^j_N, \ga^j_N),
\ \phi$ be as in the statement of the theorem.  Let $L'$ be an ample line bundle on $M
$ such that the line bundle $A:=L\otimes L'$ is ample.
Applying  Proposition \ref{var-var} with $L,L',A$ replaced with $L^N,L'^N,A^N$, 
respectively, we conclude that
$$\sqrt{\var ( Z_{s^1_N,\dots,s^k_N},\phi)} \le C_m\,
\|\ddbar \phi\|_{\infty}\,N^{k-1}\int_M \om^{m-k+1}\wedge c_1(A)^{k-1} = O(N^{k-1})\;,
$$ and hence
$$\sqrt{\var (\wt Z_{s^1_N,\dots,s^k_N},\phi)} = O(N\inv)\;.$$
\qed

\bigskip\noindent{\it Proof of Corollary  \ref{convergence}:\/} The proof follows from the 
elementary argument in \S 3.3 of \cite{SZ}, which we include here for completeness.  Consider a random 
sequence
${\bf s}=\{\s_N\}$  in
$\scal_\infty$, where $\s_N=(s^1_N,\dots,s^k_N)\in (\scal_N)^k$.  Since the masses of
$\wt Z_{\s_N}$ are bounded independent of $N$, we may assume that
$\phi$ is a smooth form in $\dcal^{m-k,m-k}_\R(U)$. Now
consider the random variables
\begin{equation} Y_N({\bf s}): =(\wt Z_{\s_N}- \E\wt
Z_{\s_N},\phi)^2\geq 0\;.\end{equation}
 By Theorem~\ref{varN}, we have
$$\int_{{\mathcal S}} Y_N({\bf s}) d\ga({\bf s})
 = \var(\wt Z_{\s_N},\phi)=
 O\left(\frac{1}{N^{2}}\right)\;.$$ Therefore
$$\int_{{\mathcal S}}\sum_{N=1}^\infty Y_N d\ga =
\sum_{N=1}^\infty \int_{\mathcal S} Y_N d\ga <+\infty,$$ and hence
$Y_N\to 0$ almost surely, i.e.
\begin{equation}\label{conv-as} (\wt Z_{\s_N},\phi)-(
\E\wt Z_{\s_N},\phi) \to 0 \qquad a.s.\end{equation}
By hypothesis,
\begin{equation}(\E\wt Z_{\s_N},\phi) \to
\int_U \Psi\wedge\phi\;,\label{by4}\end{equation} and therefore by
(\ref{conv-as})--(\ref{by4}),
$$(\wt Z_{\s_N},\phi)\to
\int_U \Psi\wedge\phi
 \qquad a.s.\;,$$
completing the proof of Corollary \ref{convergence} \qed


\begin{thebibliography}{WWW}


\bibitem [Be1]{Be1} R. Berman, Bergman kernels, random zeroes and equilibrium 
measures for polarized pseudoconcave domains, arXiv:math/0608226v2.

\bibitem [Be2]{Be2} R. Berman, Bergman kernels and equilibrium measures for 
ample line bundles, 	arXiv:0704.1640v1.

\bibitem[BSZ1]{BSZuniv} P. Bleher, B. Shiffman and S. Zelditch, Universality
and scaling of correlations between zeros on complex manifolds,  {\it
Invent.\
Math.} 142 (2000), 351--395.

\bibitem[BSZ2]{BSZsusy} P. Bleher, B. Shiffman, and S. Zelditch,  Correlations
between zeros and supersymmetry, {\it Comm.\ Math.\ Phys.} 224 (2001),
255--269.

\bibitem[Bl1]{Bl} T. Bloom, Random polynomials and Green functions,
{\it  Int.\ Math.\ Res.\ Not.}  2005 (2005), 1689--1708.

\bibitem[Bl2]{Bl2} T. Bloom, Random polynomials and (pluri)potential 
theory, {\it Ann.\ Polon.\ Math.} 91 (2007), 131-141.


\bibitem[BS]{BS} T. Bloom and B. Shiffman, Zeros of random polynomials on
$\C^m$, {\it Math.\ Res.\ Lett.} 14 (2007), 469Ð-479.

\bibitem[BBL]{BBL} E.  Bogomolny, O.  Bohigas, and P.  Leboeuf, Quantum
chaotic dynamics and random polynomials, {\it J.  Statist.\ Phys.} 85 (1996),
639--679.

\bibitem[Ca]{Ca} D. Catlin, The Bergman kernel and a theorem of Tian,
{\it Analysis and Geometry in Several Complex Variables\/}, G. Komatsu
and M. Kuranishi, eds., Birkh\"auser, Boston, MA, 1999.

\bibitem[EK]{EK} A.  Edelman and E.   Kostlan, How many zeros of a random
polynomial
are real? {\it Bull.\ Amer.\ Math.\ Soc.} 32 (1995), 1--37.


\bibitem[FH]{FH} P. J. Forrester and G. Honner, Exact statistical
properties
of the zeros of complex random polynomials, {\it J. Phys.\ A} 32  (1999),
2961--2981.

\bibitem[Ham]{Ham} J. M. Hammersley, The zeros of a random polynomial, {\it
Proceedings
of the Third Berkeley Symposium on Mathematical Statistics and Probability,
1954--1955\/}, vol.\ II,  89--111, University of California Press, Berkeley
and Los
Angeles, 1956.

\bibitem[Han]{Han} J. H. Hannay, Chaotic analytic zero points: exact
statistics for those
of a random spin state, {\it J. Phys.\ A} 29 (1996), L101--L105.


\bibitem[Kac]{Kac} M. Kac, On the average number of real roots of a random
algebraic
equation, II, {\it Proc.\ London Math. Soc.} 50 (1949), 390--408.

\bibitem[NV]{NV} S.  Nonnenmacher and A.  Voros, Chaotic eigenfunctions in
phase space,  {\it J.
Statist.\ Phys.} 92 (1998), 431--518.

\bibitem[Ro]{Ro} J. M. Rojas,
On the average number of real roots of certain random sparse polynomial systems, 
{\it
The mathematics of numerical analysis (Park City, UT, 1995)\/}, 689--699, 
Lectures in Appl.\ Math. 32, Amer.\ Math.\ Soc., Providence, RI, 1996.

\bibitem[SZ1]{SZ} B. Shiffman and S. Zelditch, Distribution of zeros of
random and quantum chaotic sections of positive line bundles,
{\it Comm.\ Math.\ Phys.} 200 (1999), 661--683.


\bibitem[SZ2]{SZequil} B. Shiffman and S. Zelditch,  Equilibrium distribution
of zeros of random polynomials, {\it Int.\ Math.\ Res.\ Not.} 2003
(2003), 25--49.

\bibitem[SZ3]{SZnewton} B. Shiffman and S. Zelditch, Random polynomials with
prescribed Newton polytope, {\it J. Amer.\ Math.\ Soc.} 17 (2004), 49--108.

\bibitem[SZ4]{SZvar} B. Shiffman and S. Zelditch,
Number variance of random zeros on complex manifolds, {\it Geom.\ Funct.\ Anal.}, to appear.

\bibitem[SZ4a]{SZvar1} B. Shiffman and S. Zelditch,
Number variance of random zeros on complex manifolds, arXiv:math/0608743v1. 
(This  early version of \cite{SZvar} contains additional results to be published elsewhere.)

\bibitem[SS]{SS} M. Shub and S. Smale, 
Complexity of Bezout's theorem. II. Volumes and probabilities,  
{\it Computational algebraic geometry (Nice, 1992)\/}, 267--285, 
Progr.\ Math.\ 109, Birkh\"auser, Boston, MA, 1993.

\bibitem[ST]{ST} M. Sodin and B. Tsirelson,
Random complex zeros, I. Asymptotic normality, {\it Israel J. Math.} 144
(2004), 125--149.

\bibitem[Ti]{Ti} G.  Tian, On a set of polarized \kahler metrics on algebraic
manifolds, {\it J.  Diff.\ Geometry\/} 32 (1990), 99--130.


\bibitem[Ws]{Ws} M. Wschebor, 
On the Kostlan-Shub-Smale model for random polynomial systems. Variance of the number
of roots, {\it
J. Complexity\/} 21 (2005), 773--789.

\bibitem[Ze]{Ze} S. Zelditch, \szego kernels and a theorem of Tian,
{\it Int.\ Math.\ Res.\ Not.} 1998 (1998),  317--331.

\bibitem[Zr]{Zr} S. Zrebiec,  The order of the decay of the hole probability 
for Gaussian random $\SU(m+1)$ polynomials, arXiv:0704.2733v1.
 

\end{thebibliography}
\end{document}